\newcommand\R{\mathcal R}
\newcommand\N{\mathcal N}
\newtheorem{theorem}{Theorem}
\newtheorem{definition}[theorem]{Definition}
\newtheorem{observe}[theorem]{Observation}
\newtheorem{remark1}[theorem]{Remark}
\newenvironment{remark}{\begin{remark1} \rm}{\end{remark1}}
    \newcolumntype{Y}{>{\small\center\arraybackslash}X}
    \newcolumntype{W}{>{\small$}c<{$}} %For tabular
    \newcolumntype{Z}{>{\small\centering\arraybackslash$}X<{$}}
\title{Randomized methods for rank-deficient linear systems}
\author{Josef Sifuentes\thanks{%
Department of Mathematics,
Texas A\&M University,
Mailstop 3368,
College Station, TX 77843-3368.
{{\em email}: {\sf josefs@math.tamu.edu.}}}
, Zydrunas Gimbutas\thanks{%
Information Technology Laboratory,
National Institute of Standards and Technology,
325 Broadway, Mail Stop 891.01,
Boulder, CO  80305-3328.
{{\em email}: {\sf {zydrunas.gimbutas@nist.gov}}}. The work of this author was 
supported in part by the 
Office of the Assistant Secretary of Defense for Research and Engineering 
and AFOSR under NSSEFF Program Award FA9550-10-1-0180.
and in part by the National Science Foundation under
grant DMS-0934733. Contributions by staff of NIST, an agency of the U.S. Government, 
are not subject to copyright within the United States.}
, and Leslie Greengard\thanks{Courant Institute of Mathematical Sciences,
         New York University, 
         251 Mercer Street,
         New York, NY 10012-1110.
{{\em email}: {\sf {greengard@cims.nyu.edu}}}. The work of this author was 
supported in part by the 
Office of the Assistant Secretary of Defense for Research and Engineering 
and AFOSR under NSSEFF Program Award FA9550-10-1-0180,
in part by the National Science Foundation under
grant DMS-0934733, and in part
by the Applied Mathematical Sciences Program of the U.S. Department of Energy
under Contract DEFGO288ER25053.}}
\begin{document}

\maketitle

{\footnotesize {\bf Abstract.} 
We present a simple, accurate method for solving consistent, rank-deficient 
linear systems, with or without additional rank-completing constraints. 
Such problems arise in a variety 
of applications, such as the computation of the eigenvectors of a matrix corresponding 
to a known eigenvalue. The method is based on elementary linear algebra combined with the 
observation that if the matrix is rank-$k$ deficient, then a random rank-$k$
perturbation yields a nonsingular matrix with probability 1.}\\

{\footnotesize {\bf Key words.} Rank-deficient systems, nullspace, null vectors, 
eigenvectors, randomized algorithms, integral equations.}

\section{Introduction}

A variety of problems in numerical linear algebra involve the 
solution of rank-deficient linear systems.
The most straightforward example is that of finding the eigenspace of a
matrix $A \in \mathbb{C}^{n \times n}$ corresponding to a known eigenvalue
$\lambda$. One then wishes to solve:
\[ (A - \lambda I) x = 0. \]
If $A$ itself is rank-deficient, of course,
then setting $\lambda = 0$ corresponds to seeking its nullspace.

A second category of problems involves the solution of an 
inhomogeneous linear system
\begin{equation}
\label{axb}
 A x = b, 
\end{equation}
where $A$ is rank-$k$ deficient but $b$ is in the range of $A$.
A third category consists of problems like $(\ref{axb})$,
but for which a set of $k$ additional
constraints are known of the form:
\begin{equation}
\label{cxf}
 C^* x = f \, ,
\end{equation}
where
\[   \left(\begin{array}{c} A \\C^* \end{array}\right) \]
is full-rank.
Here, $C \in \mathbb{C}^{n \times k}$, $C^*$ denotes its adjoint,
and $f \in \mathbb{C}^{k}$.

In this brief note, we describe a very simple framework for solving such
problems, using {\em randomized} schemes. 
They are particularly useful
when $A$ is well-conditioned in a suitable $(n-k)$-dimensional subspace.
In terms of the singular value decomposition
$A = U \Sigma V^*$, 
this corresponds to the case when 
$\sigma_1/\sigma_{n-k}$ is of modest
size and $\sigma_{n-k+1},\dots,\sigma_n = 0$, where 
the $\{ \sigma_i \}$ are the singular values of $A$.
We do {\em not} address
least squares problems, that is, we assume that the system
(\ref{axb}), with or without(\ref{cxf}), is consistent. 

\begin{definition}
We will denote by $\N(A)$ the nullspace of $A$ and
by $\R(A)$ its range.
\end{definition}

There is a substantial literature on this subject, which we do not seek
to review here. We
refer the reader to the texts \cite{GVL,HANSEN2} and the papers
\cite{BV,CHAN,CP,CP2,DHILLON,GOLUB,HANSEN1,Hochstenbach,IPSEN,STEWART}.
Of particular note are 
\cite{PAN1,PAN2}, which demonstrate the power of randomized schemes
using methods closely related to the ones described 
below. It is also worth noting that, in recent years, the use of
randomization together with numerical rank-based ideas 
has proven to be a powerful combination for a variety of problems
in linear algebra (see, for example, \cite{HMT,Tygert1,Tygert2}).

The basic idea in the present work is remarkably simple and illustrated by 
the following example. Suppose we are given a rank-1 deficient 
matrix $A$ and that we carry out the following procedure:

\begin{enumerate}
\item Choose a random vector $x \in \mathbb{C}^n$ and compute $b = Ax$.
\item Choose random vectors $p,q \in \mathbb{C}^n$ and solve
\begin{eqnarray}
(A + p q^*) y = b.
\label{eqn.solve}
\end{eqnarray}
\item Then the difference $x - y$ is in the nullspace of $A$.
\end{enumerate}

In order for $A + p q^*$ to be invertible, we must have that
$p \notin \mathcal{R}(A)$ and 
$q \notin \mathcal{R}(A^*)$. Since $p$ and $q$ are random, this 
must occur with probability 1.
%The matrix $A + p q^*$ is invertible if $p \notin \mathcal{R}(A)$ 
%and $q \notin \mathcal{R}(A^*)$.  
%Since $p$ and $q$ are randomly generated, this occurs with probability 1.  
It follows then that
$A(x-y) = b - (b - pq^{*}y) = p(q^{*}y)$. Since 
$A(x-y)$ must be in $\mathcal{R}(A)$ and $p$ is not,
both sides vanish, implying that $x-y$ is a null-vector of A, and 
$q^*y$ must be zero.

Another perspective, which may be more natural to some readers,
is to consider the affine space
$\{x' + \mathcal{N}(A) \}$, consisting of solutions to $A z = b$, where, $x'$
 is the solution of minimal norm. The difference of any two vectors 
in the affine space clearly lies in the nullspace of $A$.  If $A + p q^*$ is 
 nonsingular, $y$ is the unique vector in the affine space 
 orthogonal to $q$, implying that $x - y \in \mathcal{N}(A)$.

%In order for $A + p q^*$ to be invertible, we must have that
%$p \notin \mathcal{R}(A)$ and 
%$q \notin \mathcal{R}(A^*)$. Since $p$ and $q$ are random, this 
%must occur with probability 1.
%It then follows that
%$A(x-y) = b - (b - pq^{*}y) = p(q^{*}y)$. Since 
%$A(x-y)$ must be in $\mathcal{R}(A)$, while $p$ is not,
%both sides must vanish, $x-y$ must be a null-vector of A, and 
%$q^*y$ must be zero as well.

The remainder of this note is intended to make this 
procedure rigorous and to explore its extensions to related problems
such as solving (\ref{axb}), (\ref{cxf}).

\section{Mathematical preliminaries}

Much of our analysis depends on estimating the condition number
of a rank-$k$ deficient complex $n \times n$ matrix $A$ to which is added a 
rank-$k$ random perturbation. 
For $P,Q \in \mathbb{C}^{n\times k}$, we let

\begin{eqnarray}
P &=& P_R + P_{N^*}, \qquad \mathcal{R}(P_R) \subset \mathcal{R}(A),  \, \mathcal{R}(P_{N^*}) \subset \mathcal{N}(A^*), \nonumber \\
Q &=& Q_{R^*} + Q_{N}, \qquad \mathcal{R}(Q_{R^*}) \subset \mathcal{R}(A^*),  \, \mathcal{R}(Q_N) \subset \mathcal{N}(A),
\label{PNQNdef}
\end{eqnarray}
and
%\begin{eqnarray}
%\rho := \| P_R \|, &\qquad& \eta := 1/\| P_{N^*}^{-1} \|, \nonumber \\
%\xi := \| Q_{R^*} \|, &\qquad& \nu := 1/\| Q_N^{-1} \|, 
%\end{eqnarray}
\begin{eqnarray}
\rho := \|  P_R \| = \sigma_{max}(P_R), &\qquad& \eta := \sigma_{min}( P_{N^*} ), \nonumber \\
\xi := \| Q_{R^*} \| = \sigma_{max}(Q_{R^*}), &\qquad& \nu := \sigma_{min} ( Q_N ), \label{rhoetadef}
\end{eqnarray}
where all norms $\| \cdot \| = \| \cdot \|_2$. %are to be taken in the $L^2$ sense.

%$\rho$ is the largest singular value of $P_R$, and $\eta$ is the smallest singular value of $P_{N^*}$.

%$\xi$ is the largest singular value of $Q_{R^*}$, and $\nu$ is the smallest singular value of $Q_N$.

\begin{theorem}
Let $b = Ax$ and let ${y}$ be an approximate 
solution to 
\begin{equation}
(A + PQ^*)  y = b
\end{equation}
in that it satisfies
\begin{eqnarray}
\| b - (A + PQ^*) {y} \| \le \delta.
\label{eqn.solveres}
\end{eqnarray}  
Then
\begin{eqnarray}
\| A (x- {y}) \| \le \delta \left( 1 + \frac{\| P \|}{\sigma_{min}( P_{N^*} )} \right).
\label{eqn.errorbound}
\end{eqnarray}
\end{theorem}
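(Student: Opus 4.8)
The plan is to turn the residual bound into an exact identity for $A(x-y)$ and then project onto the cokernel $\mathcal{N}(A^*)$ to isolate the perturbation's contribution. First I would set $r := b - (A + PQ^*)y$, so that $\|r\| \le \delta$ by hypothesis $(\ref{eqn.solveres})$. Substituting $b = Ax$ and rearranging $(A+PQ^*)y = b - r$ gives the identity $A(x-y) = P(Q^* y) + r$; abbreviating $w := Q^* y \in \mathbb{C}^k$, this reads $A(x-y) = Pw + r$.

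The key step is to exploit the orthogonal splitting $\mathbb{C}^n = \mathcal{R}(A) \oplus \mathcal{N}(A^*)$, which holds because $\mathcal{R}(A)^\perp = \mathcal{N}(A^*)$. Let $\Pi$ denote the orthogonal projector onto $\mathcal{N}(A^*)$. Applying $\Pi$ to the identity annihilates the left-hand side, since $A(x-y) \in \mathcal{R}(A)$, and it removes the in-range part of the perturbation, since $\mathcal{R}(P_R) \subset \mathcal{R}(A)$ forces $\Pi P_R = 0$, while $\mathcal{R}(P_{N^*}) \subset \mathcal{N}(A^*)$ gives $\Pi P_{N^*} = P_{N^*}$, using the splitting $(\ref{PNQNdef})$. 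What survives is $0 = P_{N^*} w + \Pi r$, so that $\|P_{N^*} w\| = \|\Pi r\| \le \|r\| \le \delta$.

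From here I would invoke the full-column-rank lower bound $\|P_{N^*} w\| \ge \sigma_{min}(P_{N^*})\,\|w\|$, valid because $\sigma_{min}(P_{N^*})$ is the smallest singular value of the $n \times k$ matrix $P_{N^*}$, to obtain $\|w\| \le \delta / \sigma_{min}(P_{N^*})$. Returning to $A(x-y) = Pw + r$ and applying the triangle inequality then yields $\|A(x-y)\| \le \|P\|\,\|w\| + \|r\| \le \delta\left(1 + \|P\|/\sigma_{min}(P_{N^*})\right)$, which is exactly $(\ref{eqn.errorbound})$.

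I do not anticipate a serious obstacle, as the argument reduces to a single well-chosen projection. The only point requiring care is the geometric observation that projecting onto $\mathcal{N}(A^*)$ simultaneously eliminates $A(x-y)$ and the in-range component $P_R w$, leaving an equation in $P_{N^*}$ alone; the identities $\Pi P_R = 0$ and $\Pi P_{N^*} = P_{N^*}$ follow directly from the subspace inclusions in $(\ref{PNQNdef})$. The estimate is vacuous precisely when $\sigma_{min}(P_{N^*}) = 0$, i.e. when $P$ has no component in $\mathcal{N}(A^*)$ and $A + PQ^*$ fails to be invertible, consistent with the earlier remark that invertibility requires $p \notin \mathcal{R}(A)$.
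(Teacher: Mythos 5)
Your proof is correct and takes essentially the same approach as the paper: both exploit the identity $A(x-y) = P(Q^*y) + r$ and kill the range components by passing to $\mathcal{N}(A^*)$ --- you via the orthogonal projector $\Pi = UU^*$, the paper via left-multiplication by $U^*$ where the columns of $U$ are an orthonormal basis of $\mathcal{N}(A^*)$ --- then bound $\|Q^*y\|$ by $\delta/\sigma_{min}(P_{N^*})$ and finish with the triangle inequality. These projections are interchangeable since $\|U^* v\| = \|UU^* v\|$, an identity the paper itself invokes in its final display.
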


\begin{proof}
It follows from (\ref{eqn.solveres}) and the triangle inequality that
\begin{eqnarray}
\| A(x - {y}) \| \le \delta + \| P \| \|Q^* {y} \|.
\label{eqn.resstep}
\end{eqnarray}
Moreover,
\begin{eqnarray}
b - A {y} - P (Q^* {y}) = \delta f
\end{eqnarray}
for some vector $f \in \mathbb{C}^n$ with $\| f \| \le 1$. 
Now let $U$ be a matrix whose columns form an orthonormal basis for $\N(A^*)$. 
Multiplying on the left by $U^*$ we have 
\begin{eqnarray}
-(U^*P) \, (Q^* {y}) &=& \delta (U^* f), \\
\|Q^*y\| &\le& \frac{\delta}{ \sigma_{min}(P_{N^*}) },
\end{eqnarray}
where the last inequality follows from the fact that 
\begin{equation}
\delta \ge \inf_{\|z\|=1,z \in \mathbb{C}^k} \| U^* P z \| \| Q^* y \| = 
\inf_{\|z\|=1,z \in \mathbb{C}^k} \| U U^* P z \| \| Q^* y \|
= \sigma_{min} (P_{N^*}) \| Q^* y \|,
\end{equation} 
which yields the desired result when combined with (\ref{eqn.resstep}).
\end{proof}

The obtained bound (\ref{eqn.errorbound}) indicates that $x-y$ is 
an approximate null-vector of matrix $A$, therefore, $y$ is also 
an approximate solution to $A y = b$ for a given
consistent right-hand side $b\in \R(A)$.

\begin{theorem}
Let $A \in \mathbb{C}^{n \times n}$ have a $k$-dimensional nullspace and 
let $P,\,Q \in \mathbb{C}^{n\times k}$.  Then
\begin{eqnarray}
\| (A + PQ^*)^{-1} \| &\le& \frac{1}{\sigma_{n-k}} 
\sqrt{
1 + 
 \left(\frac{\rho}{\eta}\right)^2 + 
 \left(\frac{\xi}{\nu}\right)^2 + 
 \left( \frac{\sigma_{n-k} + \rho \xi}{\eta \nu} \right)^2  \, ,
 }
\end{eqnarray}
\label{the.geninvbound.multi}
where $\rho,\eta,\xi,\nu$ are defined in (\ref{rhoetadef}).
\end{theorem}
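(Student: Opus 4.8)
The plan is to diagonalize $A$ by its singular value decomposition and reduce the statement to an explicit block factorization of the perturbed matrix. Write $A = U\Sigma V^*$ and partition $U = [\,U_R\ \ U_N\,]$ and $V = [\,V_R\ \ V_N\,]$ so that the trailing $k$ columns $U_N, V_N$ are orthonormal bases of $\N(A^*)$ and $\N(A)$, and $\Sigma = \mathrm{diag}(\Sigma_R,0)$ with $\Sigma_R = \mathrm{diag}(\sigma_1,\dots,\sigma_{n-k})$ invertible. Because the spectral norm is unitarily invariant, $\|(A+PQ^*)^{-1}\| = \|\big(U^*(A+PQ^*)V\big)^{-1}\|$, so it suffices to bound the inverse of the conjugated matrix. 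Introducing the coordinate blocks $P_1 = U_R^*P$, $P_2 = U_N^*P$, $Q_1 = V_R^*Q$, $Q_2 = V_N^*Q$, the decomposition (\ref{PNQNdef}) identifies $\rho = \|P_1\|$, $\eta = \sigma_{min}(P_2)$, $\xi = \|Q_1\|$, $\nu = \sigma_{min}(Q_2)$. The corner blocks $P_2,Q_2 \in \mathbb{C}^{k\times k}$ are square, and they are invertible exactly when $\eta,\nu>0$, which is precisely the condition under which $A+PQ^*$ is nonsingular.

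The heart of the argument is to note that, in these coordinates,
$$U^*(A+PQ^*)V = \begin{pmatrix} \Sigma_R + P_1Q_1^* & P_1Q_2^* \\ P_2Q_1^* & P_2Q_2^* \end{pmatrix},$$
and that this admits the LDU-type factorization
$$\begin{pmatrix} \Sigma_R + P_1Q_1^* & P_1Q_2^* \\ P_2Q_1^* & P_2Q_2^* \end{pmatrix} = \begin{pmatrix} I & P_1P_2^{-1} \\ 0 & I \end{pmatrix}\begin{pmatrix} \Sigma_R & 0 \\ 0 & P_2Q_2^* \end{pmatrix}\begin{pmatrix} I & 0 \\ (Q_2^*)^{-1}Q_1^* & I \end{pmatrix}.$$
Inverting the three triangular/diagonal factors and multiplying out gives a closed form for the inverse as a $2\times2$ block matrix whose blocks are $\Sigma_R^{-1}$, $-\Sigma_R^{-1}P_1P_2^{-1}$, $-(Q_2^*)^{-1}Q_1^*\Sigma_R^{-1}$, and $(Q_2^*)^{-1}Q_1^*\Sigma_R^{-1}P_1P_2^{-1} + (P_2Q_2^*)^{-1}$. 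Each block is then bounded by submultiplicativity together with $\|\Sigma_R^{-1}\| = 1/\sigma_{n-k}$, $\|P_2^{-1}\| = 1/\eta$, $\|(Q_2^*)^{-1}\| = 1/\nu$, $\|P_1\| = \rho$, $\|Q_1\| = \xi$; this produces the block bounds $1/\sigma_{n-k}$, $\rho/(\sigma_{n-k}\eta)$, $\xi/(\sigma_{n-k}\nu)$, and $(\sigma_{n-k}+\rho\xi)/(\sigma_{n-k}\eta\nu)$ respectively.

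To assemble these into the stated bound I would invoke the elementary fact that the spectral norm of a block matrix is at most the spectral norm of the $2\times2$ matrix of its block norms, which in turn is at most that matrix's Frobenius norm, i.e.\ the square root of the sum of the squares of the four block bounds. Factoring $1/\sigma_{n-k}^2$ out of that sum reproduces exactly $1 + (\rho/\eta)^2 + (\xi/\nu)^2 + \big((\sigma_{n-k}+\rho\xi)/(\eta\nu)\big)^2$ under the square root, which is the claim.

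I expect the main obstacle to be producing the factorization: once the perturbed matrix is written in the range/nullspace-adapted coordinates and the LDU form above is guessed and verified, the inversion is mechanical and the four-term structure of the bound falls out automatically from the block-norm inequality. A secondary point requiring care is confirming that the hypotheses $\eta,\nu>0$ are exactly what make the factorization—and $A+PQ^*$ itself—well defined.
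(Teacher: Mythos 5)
Your proposal is correct and follows essentially the same route as the paper: the same SVD change of coordinates, the same $2\times 2$ block structure with blocks measured by $\rho,\eta,\xi,\nu$, the same explicit block inverse, and the same sum-of-squares assembly of the norm bound. The only cosmetic difference is that you obtain the block inverse via an LDU factorization (and justify the final inequality through the matrix of block norms), whereas the paper simply exhibits the inverse directly and verifies it by multiplication.
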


\begin{proof}
\vspace{0.1in}

Let $A = U \Sigma V^*$ be the singular value decomposition of $A$.
Let $C$ and $D$ be such that $P = UC$ and $Q = VD$.  
Let $C^T = [C_R^T \,\,\, C_{N^*}^T]$ where $C_R \in \mathbb{C}^{(n-k) \times k}$
and $C_{N^*} \in \mathbb{C}^{k \times k}$.  The entries in the columns of $C_R$ 
are coefficients of the corresponding columns of $P$ in an orthonormal basis of the range of $A$.  
Thus $\| C_R \| = \rho$, and similarly $\| C_{N^*}^{-1} \| = 1 / \eta$.  Let 
$D^T = [D_{R^*}^T \,\,\, D_N^T]$ where $D_{R^*} \in \mathbb{C}^{(n-k) \times k}$
and $D_N \in \mathbb{C}^{k \times k}$.  By similar reasoning, we have that 
$\| D_{R^*} \| = \xi$ and $\| D_N^{-1} \| = 1 / \nu$
%Note that the
%first $n-k$ rows of $C$ give coefficients of $P$ in an orthonormal
%basis for the range of $A$, so we call the matrix of those entries
%$C_R \in \mathbb{C}^{(n-k)\times k}$, and $\| C_R \| = \rho$. The last
%$k$ rows of $C$ are the coefficients for $P$ in the basis for the null
%space of $A^*$, so $C_{N^*} \in \mathbb{C}^{k\times k}$ and
%$\|C_{N^*}^{-1}|| = 1/\eta$.  Similarly, if $D_{R^*} \in
%\mathbb{C}^{(n-k)\times k}$ are the first $n-k$ rows of $D$, and $D_N
%\in \mathbb{C}^{k\times k}$ are the last $k$ rows of $D$, we have that
%$\|D_{R^*}\| = \xi$ and $||D_N^{-1}|| = 1/\nu$.
%%$\| c_{1:n-k} \| = \rho$, $|c_n| = \eta$, $\| D_{1:n-k} \| = \xi$, and $|D_n| = \nu$.  
\begin{eqnarray}
\| (A + P Q^*)^{-1} \| = \| ( \Sigma + C D^* )^{-1} \|
\end{eqnarray}
and
\begin{eqnarray}
(\Sigma + CD^*)^{-1} 
&=& 
\left(
\begin{array}{cc}
\Sigma' + C_R D_{R^*}^* & C_R D^*_N \\
C_{N^*} D_{R^*}^* & C_{N^*} D^*_N
\end{array}
\right)^{-1} \\
&=&
\left(
\begin{array}{cc}
\Sigma'^{-1} & -\Sigma'^{-1} C_R (C_{N^*})^{-1}\\
-(D^*_N)^{-1} D^*_{R^*}\, \Sigma'^{-1} & 
(D_N^*)^{-1} \left(I_k + D_{R^*}^* \, \Sigma'^{-1} C_R\right) (C_{N^*})^{-1}
\end{array}
\right),
\end{eqnarray}
where $\Sigma' \in \mathbb{C}^{(n-k) \times (n-k)}$ is the upper $(n-k) \times (n-k)$ sub-matrix of $\Sigma$ and $I_k \in
\mathbb{C}^{k \times k}$ is the identity matrix.  This gives
\begin{eqnarray}
\| ( \Sigma + C D^*)^{-1} \| &\le&
\sqrt{
\frac{1}{\sigma_{n-k}^2} + 
\left(\frac{\rho}{\sigma_{n-k} \eta}\right)^2 + 
\left( \frac{\xi}{\sigma_{n-k} \nu} \right)^2 +
\left(
\frac{1 + \rho \xi /  \sigma_{n-k}}{\eta \nu} 
\right)^2
} \\
&=& \frac{1}{\sigma_{n-k}} \sqrt{
1 + \left(\frac{\rho}{\eta}\right)^2 + 
\left(\frac{\xi}{\nu}\right)^2 + 
\left(\frac{\sigma_{n-k} + \rho \xi}{\eta \nu}\right)^2.
}
\end{eqnarray}

\end{proof}

It follows from this result that one can bound the conditioning of the 
perturbed matrix.
\begin{theorem} \label{condthm}
Let $A \in \mathbb{C}^{n \times n}$ have a $k$-dimensional nullspace and 
let $P, \, Q  \in \mathbb{C}^{n \times k}$.  Then
\begin{eqnarray}
\kappa(A + P Q^*) &\le& 
\frac{\sigma_1 + \| P \| \, \| Q \|}{\sigma_{n-k}} \sqrt{
1 + \left(\frac{\rho}{\eta}\right)^2 + 
\left(\frac{\xi}{\nu}\right)^2 + 
\left(\frac{\sigma_{n-k} + \rho \xi}{\eta \nu}\right)^2 \, ,
}
\end{eqnarray}
where $\rho,\eta,\xi,\nu$ are defined in (\ref{rhoetadef}).
\end{theorem}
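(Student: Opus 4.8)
The plan is to recognize that this statement is an immediate consequence of the definition of the condition number combined with the inverse bound already established in Theorem~\ref{the.geninvbound.multi}. Recall that for any invertible matrix $M$ one has $\kappa(M) = \|M\| \, \|M^{-1}\|$. Since Theorem~\ref{the.geninvbound.multi} supplies the factor $\|(A + PQ^*)^{-1}\|$ verbatim, the only new ingredient needed is an upper bound on the norm $\|A + PQ^*\|$ of the perturbed matrix itself.

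First I would bound the numerator. By the triangle inequality and submultiplicativity of the spectral norm,
\begin{equation}
\|A + PQ^*\| \le \|A\| + \|PQ^*\| \le \|A\| + \|P\| \, \|Q^*\|.
\end{equation}
Since the spectral norm of $A$ equals its largest singular value, $\|A\| = \sigma_1$, and since the spectral norm is invariant under taking adjoints, $\|Q^*\| = \|Q\|$. Combining these gives $\|A + PQ^*\| \le \sigma_1 + \|P\| \, \|Q\|$.

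Next I would invoke Theorem~\ref{the.geninvbound.multi} for the factor $\|(A + PQ^*)^{-1}\|$ and form the product $\kappa(A + PQ^*) = \|A + PQ^*\| \, \|(A + PQ^*)^{-1}\|$. Multiplying the two bounds yields precisely the stated inequality, with the quantity $(\sigma_1 + \|P\| \, \|Q\|)/\sigma_{n-k}$ emerging as the prefactor multiplying the square root.

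There is essentially no genuine obstacle here: the substantive work was done in the earlier inverse estimate, and what remains is the elementary norm bound on the perturbed matrix. The only points requiring any care are the two standard identifications $\|A\| = \sigma_1$ and $\|Q^*\| = \|Q\|$, both of which follow directly from the singular value decomposition $A = U \Sigma V^*$.
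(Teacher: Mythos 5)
Your proposal is correct and matches the paper's intent exactly: the paper states this theorem as an immediate consequence of Theorem~\ref{the.geninvbound.multi} (``It follows from this result\ldots''), with the implicit steps being precisely your identity $\kappa(M) = \|M\|\,\|M^{-1}\|$ together with the triangle-inequality bound $\|A + PQ^*\| \le \sigma_1 + \|P\|\,\|Q\|$. Nothing is missing, and your write-up simply makes explicit what the paper leaves to the reader.
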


The preceding theorems indicate that, in the absence of
additional information, it is reasonable to pick random vectors of
approximately unit norm and to scale the perturbation term $P Q^{*}$
by the norm of $A$.

\section{Solving consistent, rank-deficient linear systems} \label{sec:consist}

Let us first consider the solution of the rank-$k$ deficient linear system
$Ax=b$ in the special case where $\N(A)$ and $\N(A^*)$ are spanned by the  
columns of known matrices $N$ and $V$, respectively. 
Suppose, now, that we solve the linear system
\begin{equation}
 (A + VN^*)x = b \, . 
\label{rankdefsys}
\end{equation}
Consistency here requires that $V^*Ax = V^*b = 0$, so that 
$(V^*V)(N^*x) = 0$, from which $N^*x = 0$. Thus, $x$ is the particular
solution to $Ax=b$ that is orthogonal to the nullspace of $A$.
From Theorem \ref{condthm}, the condition number of $A+VN^*$ is given by 
\begin{eqnarray}
\kappa(A + V N^*) &\le& 
\frac{\sigma_1 + \| V \| \, \| N \|}{\sigma_{n-k}} \sqrt{
1 + 
\left(\frac{\sigma_{n-k}}{\sigma_{min}(V) \sigma_{min}(N)}\right)^2
}.
\end{eqnarray}

\begin{remark}
Note that this procedure allows us to 
obtain the minimum norm solution to the underdetermined
linear system without recourse to the SVD or other dense matrix methods.
Any method for solving (\ref{rankdefsys}) can be used. Assuming that 
$(A+VN^*)$ is reasonably well conditioned and that $A$ can be applied 
efficiently, Krylov space methods such as GMRES are extremely effective.
\end{remark}

Suppose now that we have no prior information about the nullspaces
of $A$ and/or $A^*$. We may then substitute random matrices
$P$ and $Q$ for 
$V$ and/or $N$ and follow the same procedure. With probability 1, 
$(A + P Q^*)$ will be invertible and we will obtain the particular
solution to $Ax=b$ that is orthogonal to $Q$. This simply requires
that the projections of $P$ onto $\N(A^*)$
and of $Q$ onto $\N(A)$, denoted by 
$P_{N^*}$ and $Q_{N}$ respectively, must be full-rank
(see (\ref{PNQNdef})). 

\subsection{Consistent, rectangular linear systems} \label{sec:rect}

We next consider the case where we wish to solve
the system (\ref{axb}) together with (\ref{cxf}). Note that, for consistency, 
we must still have that
$V^*A = V^*b = 0$, where the columns of $V$ span $\N(A^*)$.
Note also that the system 
\begin{equation}
   \left(\begin{array}{c} A \\C^* \end{array}\right) x = 
\left(\begin{array}{c} b \\f \end{array} \right)  \label{rectangular}
\end{equation}
is full-rank if and only if 
any vector in $\N(A)$ has a nontrivial projection onto the 
columns of $C$.
There is no need, however, to solve a rectangular system
of equations (\ref{rectangular}).  One need only solve the $n \times n$ linear system
\[  (A + V C^*) x = b + V f\, .\]
From Theorem \ref{condthm}, the condition number of $A+VC^*$ is given by 
\begin{eqnarray}
\kappa(A + V C^*) &\le& 
\frac{\sigma_1 + \| V \| \, \| C \|}{\sigma_{n-k}} \sqrt{
1 + \left(\frac{\xi}{\sigma_{min}(C_{N})}\right)^2 + 
\left(\frac{\sigma_{n-k}}{\sigma_{min}(V) \sigma_{min}(C_{N})}\right)^2
},
\end{eqnarray}
where $\xi$ is the norm of $C_{R^*}$.
In some applications, the data may be known to be consistent
($b$ is in the range of $A$), but $V$ may not be known.
Then, one can proceed, as above, by solving 
\[  (A + P C^*) x = b + P f\, ,\]
where $P$ is a random $n \times k$ matrix.
From Theorem \ref{condthm}, the condition number of $A+PC^*$ is given by 
\begin{eqnarray}
\kappa(A + P C^*) &\le& 
\frac{\sigma_1 + \| P \| \, \| C \|}{\sigma_{n-k}} \sqrt{
1 + \left(\frac{\rho}{\sigma_{min}(P_{N^*})}\right)^2 + 
\left(\frac{\xi}{\sigma_{min}(C_{N})}\right)^2 + 
\left(\frac{\sigma_{n-k} + \rho \xi}{\sigma_{min}(P_{N^*}) \sigma_{min}(C_{N})}\right)^2
},
\end{eqnarray}
where $\rho$ and $\xi$ are the norms of $P_{R}$ and 
$C_{R^*}$, respectively.

\section{Computing the nullspace}

Let us return now to the question of finding a basis for the 
nullspace of a rank-$k$ deficient matrix
$A \in \mathbb{C}^{n \times n}$. As in the introduction, we begin 
by describing the procedure.

\begin{enumerate}
\item Choose $k$ random vectors $\{x_i, i = 1,\dots k\}
\in \mathbb{C}^n$ and compute $b_i = Ax_i$.
\item Choose random matrices $P,Q \in \mathbb{C}^{n\times k}$ and solve
\begin{eqnarray}
(A + P Q^*) y_i = b_i.
\label{eqn.solvesys}
\end{eqnarray}
\end{enumerate}

Then, $A(x_i-y_i) = b_i - (b_i - PQ^* y_i) = P(Q^*y_i)$. Since
$A(x_i-y_i) \in \R(A)$, and assuming $P(Q^*y_i) \notin \R(A)$, it follows
that both sides must equal zero and that
each vector $z_i = x_i-y_i$ is a null vector. 
Since the construction is random, the probability that the $\{z_i\}$ are
linearly independent is 1. 
The result $P(Q^*y_i) \notin \R(A)$ follows from the fact that
$P$ is random and that the projection of each column of $P$ onto 
$\N(A^*)$ will be linearly independent with probability 1.
Theorem \ref{condthm} tells us how to estimate the condition number
of (\ref{eqn.solvesys}). Finally, the accuracy of the nullspace vectors $\{z_i\}$ can be further
improved by an iterative refinement $\tilde z_i = z_i-\tilde y_i$, where the correction vectors ${\tilde y_i}$ solve (\ref{eqn.solvesys})
\begin{eqnarray}
  (A + P Q^*) \tilde y_i = \tilde b_i, 
\label{eqn.solvesys_iter}
\end{eqnarray}
with the updated right-hand sides $\tilde b_i =  Az_i$.

\subsection{Determining the dimension of the nullspace}

When the dimension of the nullspace is unknown,
the algorithm above can also be used as a {\em rank-revealing} scheme.
For this, suppose that the actual rank-deficiency is known to be
$k_A$ and that we carry out the above procedure with $k > k_A$. 
The argument that 
$P(Q^*y_i) \notin \R(A)$ will fail, since the 
projection of each of the columns of $P$ onto 
$\N(A^*)$ must be linearly dependent. As a result, $x_i-y_i$ will fail to be 
a null-vector (which will be obvious from the explicit computation of 
$A(x_i-y_i)$. The estimated rank $k$ can then be systematically reduced
to determine $k_A$. If $k_A$ is large, bisection can be used to accelerate
this estimate. 

\subsection{Stabilization}

Since the condition number of the randomly perturbed matrix is 
controlled only in a probabilistic sense, if high precision is required
one can use a variant of iterative refinement to improve the solution.
That is, one can first compute $q_1,\dots,q_k$ as approximate
null-vectors of $A$ and 
$p_1,\dots,p_k$ as approximate
null-vectors of $A^*$. With these at hand, one can repeat the calculation
with $P$ and $Q$ whose columns are $\{p_1,\dots,p_k \}$
and $\{q_1,\dots,q_k \}$, respectively. The parameters $\rho/\eta$ and 
$\xi/\nu$ in Theorem \ref{condthm} will be much less than 1
and the condition number of a second iteration will be approximately

\begin{eqnarray}
\kappa(A + P Q^*) &\approx& 
\frac{\sigma_1 + \| P \| \, \| Q \|}{\sigma_{n-k}} \sqrt{
1 + \left(\frac{\sigma_{n-k}}{\sigma_{min}(P_{N^*}) \sigma_{min}(Q_{N})}\right)^2
} \, .
\end{eqnarray}

\section{Numerical experiments}

In this section, we describe the results of several numerical tests of
the algorithms discussed above. All computations were performed in
IEEE double-precision arithmetic using MATLAB version R2012a \footnote{Any mention of
commercial products or reference to commercial organizations is for
information only; it does not imply recommendation or endorsement by
NIST.}.

We use a pseudorandom number generator to create
$n\times 1$ vectors $\phi_1$, $\phi_2, \ldots, \phi_{n-k}$ and
$\psi_1$, $\psi_2, \ldots, \psi_{n-k}$, with entries that are
independent and identically distributed Gaussian random variables of
zero mean and unit variance. We apply
the Gram-Schmidt process with reorthogonalization to $\phi_1$,
$\phi_2, \ldots, \phi_{n-k}$ and $\psi_1$, $\psi_2, \ldots,
\psi_{n-k}$ to obtain orthonormal vectors $u_1, u_2, \ldots, u_{n-k}$,
and $v_1, v_2, \ldots, v_{n-k}$, respectively. We define $A$ to be the
$n\times n$ matrix
\begin{equation}
   A = \sum_{i=1}^{n-k} u_i \sigma_i v_i^*,
\end{equation}
where $\sigma_i = 1/i$. The rank deficiency of A is clearly equal to $k$.

In Table \ref{tab:ex1simple}, we compare the regular and stabilized
versions of the new algorithm for finding the nullspace of a
rank-deficient matrix $A$. The first and second columns contain the
parameters $n$ and $k$ determining the size and the rank deficiency of
problem, respectively.  The third column contains the modified
condition number $\sigma_1/\sigma_{n-k}$ of the original matrix $A$,
ignoring the zero singular values for more meaningful comparison
between columns. The fourth columns contains the true condition number
$\sigma_1/\sigma_{n}$ of a random rank-$k$ perturbation $A+P
Q^{*}$. Finally, the fifth and sixth columns contain the relative
accuracy $||AN||/||N||$ in determining the nullspace vectors $N$ for
the randomized rank-$k$ correction scheme before and after iterative
refinement, respectively.

In Table \ref{tab:ex2}, we compare the accuracy of the regular and
stabilized versions of the randomized rank-k correction scheme for
solving a rank-deficient linear system $Ax=b$ with a consistent right
hand side $b$. The first and second columns contain the parameters $n$
and $k$ determining the size and the rank deficiency of problem,
respectively.  The third and forth columns contain the modified
condition number $\sigma_1/\sigma_{n-k}$ of the original matrix $A$
and the condition number $\sigma_1/\sigma_{n}$ of a random rank-$k$
perturbation $A+P Q^{*}$, respectively. The fifth columns contains the
condition number $\sigma_1/\sigma_{n}$ of the rank-$k$ perturbation
$A+V N^{*}$, where $V$ and $N$ are the approximate null-vectors
spanning the left and right nullspaces, respectively.  Finally, the
fifth and seventh columns contain the relative accuracy
$||Ax-b||/||b||$ in determining the solution vector $x$ for the
regular and stabilized schemes, respectively.

It is clear from Table \ref{tab:ex2} that the condition number can be
quite large for the non-stabilized version of the algorithm when the
rank deficiency is high. This is due to the difficulty of finding
high-dimensional random matrices $P$ and $Q$ that have large
projections onto the corresponding nullspaces $\N(A^{*})$ and $\N(A)$.
In such cases, the algorithm will strongly benefit from
the stabilization procedure.

\begin{table}
     \centering
     \begin{tabularx}{.9\linewidth}{Z|Z Z Z Z Z Z} \toprule
  n  & k & \text{cond}(A) & \text{cond}(A+P Q^{*})  
                               & E_2 & E_2(refined)  \\
\midrule

 160 &  1 & 1.590E+02 & 2.025E+03 & 1.368E-16 & 8.106E-17 \\ 
 160 &  3 & 1.570E+02 & 4.258E+04 & 2.180E-15 & 2.727E-16 \\ 
 160 &  6 & 1.540E+02 & 1.144E+04 & 2.706E-14 & 6.382E-16 \\ 
 320 &  1 & 3.190E+02 & 5.259E+03 & 9.072E-17 & 3.556E-17 \\ 
 320 &  3 & 3.170E+02 & 9.340E+03 & 1.983E-16 & 6.029E-17 \\ 
 320 &  6 & 3.140E+02 & 3.374E+04 & 7.461E-16 & 2.471E-16 \\ 
 640 &  1 & 6.390E+02 & 3.968E+04 & 1.934E-16 & 2.099E-16 \\ 
 640 &  3 & 6.370E+02 & 1.332E+06 & 3.879E-15 & 5.817E-16 \\ 
 640 &  6 & 6.340E+02 & 3.899E+06 & 5.924E-13 & 5.781E-16 \\ 
1280 &  1 & 1.279E+03 & 6.003E+06 & 5.549E-16 & 3.244E-16 \\ 
1280 &  3 & 1.277E+03 & 4.998E+04 & 1.023E-14 & 6.990E-17 \\ 
1280 &  6 & 1.274E+03 & 6.515E+05 & 3.706E-15 & 8.126E-16 \\ 
\midrule
 160 & 75 & 8.500E+01 & 2.394E+05 & 4.208E-13 & 2.118E-14 \\ 
 160 & 80 & 8.000E+01 & 3.199E+04 & 2.185E-13 & 2.480E-15 \\ 
 320 & 155 & 1.650E+02 & 1.445E+06 & 3.228E-12 & 7.493E-15 \\ 
 320 & 160 & 1.600E+02 & 1.607E+06 & 1.465E-11 & 1.578E-14 \\ 
 640 & 315 & 3.250E+02 & 1.006E+07 & 1.155E-11 & 6.883E-15 \\ 
 640 & 320 & 3.200E+02 & 4.288E+06 & 1.602E-11 & 1.939E-14 \\ 
1280 & 635 & 6.450E+02 & 3.551E+08 & 2.714E-10 & 4.323E-14 \\ 
1280 & 640 & 6.400E+02 & 1.873E+08 & 1.902E-11 & 5.665E-14 \\ 
        \bottomrule
          \end{tabularx}
  \caption{\em\small Relative errors in determining the nullspace vectors 
for the randomized rank-k correction scheme before and after iterative
refinement.}
\label{tab:ex1simple}
\end{table}

\begin{table}
     \centering
     \begin{tabularx}{.9\linewidth}{Z|Z Z Z Z Z Z} \toprule
  n  & k & \text{cond}(A) & \text{cond}(A+P Q^{*})  
                               & E_2& \text{cond}(A+U V^{*})  
                               & E_2(stab) \\
\midrule
 160 &  1 & 1.590E+02 & 9.017E+02 & 1.282E-15 & 1.590E+02 & 1.141E-15 \\ 
 160 &  3 & 1.570E+02 & 3.121E+03 & 3.890E-15 & 1.570E+02 & 1.910E-15 \\ 
 160 &  6 & 1.540E+02 & 1.284E+06 & 1.487E-13 & 1.540E+02 & 1.656E-15 \\ 
 320 &  1 & 3.190E+02 & 4.956E+05 & 7.388E-15 & 3.190E+02 & 1.209E-15 \\ 
 320 &  3 & 3.170E+02 & 4.059E+05 & 6.638E-14 & 3.170E+02 & 2.939E-15 \\ 
 320 &  6 & 3.140E+02 & 3.271E+04 & 1.100E-14 & 3.140E+02 & 2.704E-15 \\ 
 640 &  1 & 6.390E+02 & 1.232E+05 & 1.758E-14 & 6.390E+02 & 2.072E-15 \\ 
 640 &  3 & 6.370E+02 & 8.812E+04 & 9.113E-15 & 6.370E+02 & 3.085E-15 \\ 
 640 &  6 & 6.340E+02 & 1.622E+05 & 9.870E-15 & 6.340E+02 & 2.797E-15 \\ 
1280 &  1 & 1.279E+03 & 8.325E+04 & 4.545E-15 & 1.279E+03 & 3.483E-15 \\ 
1280 &  3 & 1.277E+03 & 5.174E+05 & 1.714E-14 & 1.277E+03 & 6.914E-15 \\ 
1280 &  6 & 1.274E+03 & 7.675E+05 & 3.905E-14 & 1.274E+03 & 4.661E-15 \\ 
\midrule
 160 & 75 & 8.500E+01 & 7.057E+04 & 3.854E-13 & 8.500E+01 & 4.157E-15 \\ 
 160 & 80 & 8.000E+01 & 2.357E+04 & 9.249E-14 & 8.000E+01 & 3.975E-15 \\ 
 320 & 155 & 1.650E+02 & 1.732E+05 & 1.886E-13 & 1.650E+02 & 1.208E-14 \\ 
 320 & 160 & 1.600E+02 & 9.449E+05 & 6.109E-12 & 1.600E+02 & 8.945E-15 \\ 
 640 & 315 & 3.250E+02 & 5.510E+07 & 8.537E-11 & 3.250E+02 & 2.612E-14 \\ 
 640 & 320 & 3.200E+02 & 2.623E+07 & 1.591E-11 & 3.200E+02 & 1.884E-14 \\ 
1280 & 635 & 6.450E+02 & 5.970E+06 & 7.540E-12 & 6.450E+02 & 3.236E-14 \\ 
1280 & 640 & 6.400E+02 & 1.134E+07 & 1.162E-11 & 6.400E+02 & 7.476E-14 \\ 
        \bottomrule
          \end{tabularx}
          \caption{\em\small Relative errors for the regular and stabilized versions of the randomized rank-k correction scheme in determining the solution of the rank-$k$ deficient linear system $Ax=b$ with the consistent right-hand side $b\in \R(A)$.}
\label{tab:ex2}
\end{table}

\section{Further examples}

Our interest in the development of randomized methods was driven largely by 
issues in the regularization of integral equation methods in potential theory.
For illustration, consider the Neumann problem for the Laplace equation
in the interior of a simply-connected, smooth domain $\Omega \subset \mathbb{R}^2$ with
boundary $\Gamma$.

\[ \Delta u = 0 \ {\rm in}\ \Omega,\quad
\frac{\partial u}{\partial n} = f \ {\rm on}\ \Gamma \, .
\]

Classical potential theory \cite{GL} suggests seeking the solution as a single layer
potential
\[ u(x) = \frac{1}{2\pi} \int_\Gamma \log\|x-y\| \sigma(y) \, ds_y \, . \]
Using standard jump relations, this results in the integral equation
\begin{equation} \sigma(x) + \frac{1}{\pi} \int_\Gamma \frac{\partial}{\partial n_x}
\log\|x-y\| \sigma(y) \, ds_y = 2f(x) \, , 
\label{neuinteq}
\end{equation}
which we write as
\[ (I + K) \sigma = 2f \, . \]
It is well-known that (\ref{neuinteq}) is solvable if and only if the right-hand
side satisfies the compatibility condition: $\int_\Gamma f(y) ds_y = 0$.
Using the $L_2$ inner product (for real-valued functions)
\[ \langle f,g \rangle =  \int_\Gamma f(y) g(y) ds_y, \]
we may write the compatibility condition as 
\[ {\langle 1,f \rangle} =  0 \, , \]
where $1$ denotes the function that is identically $1$ on $\Gamma$.
The function $1$ is also in the nullspace of $I + K^*$, the adjoint of the integral
operator in (\ref{neuinteq}), which is clearly neccesary for solvability.
Following the procedure in section \ref{sec:consist}, we
regularize the integral equation by solving

\begin{equation} \sigma(x) + \frac{1}{\pi} \int_\Gamma \frac{\partial}{\partial n_x}
\log\|x-y\| \sigma(y) \, ds_y  +  \int_\Gamma [r(x) {1}(y) ] \sigma(y) \, dy
= 2f(x) \, , 
\label{neuinteqreg}
\end{equation}
or 
\[ (I + K)\sigma + r(x) {\langle 1, \sigma \rangle}  = 2f \, , \]
where $r(x)$ is a random function defined on $\Gamma$.
Taking the inner product of (\ref{neuinteqreg}) with the function $1$ yields
\[ \langle 1, r \rangle \, {\langle 1, \sigma \rangle}  = 0 \, . \]
This is a well-known fact for the Neumann problem, and the obvious choice
is simply $r(x) = 1$ so that (\ref{neuinteqreg}) becomes:
\[ \sigma(x) + \frac{1}{\pi} \int_\Gamma  \left[ \frac{\partial}{\partial n_x}
\log\|x-y\| + 1 \right] \sigma(y) \, ds_y = 2f(x) \, .
\]

For an application of the preceding analysis
in electromagnetic scattering, see \cite{mfie_stab}.
In \cite{mfie_mc}, a situation of the type 
discussed in section \ref{sec:rect} arises. 
Without entering into details, it was shown that the
``magnetic field integral equation" 
is rank-$k$ deficient in the static limit
in exterior multiply-connected domains of genus $k$. A set of $k$
nontrivial constraints was derived from electromagnetic considerations,
which were added to the system matrix as described above. 
Since we have illustrated the basic principle in the context of the nullspace
problem, we omit further numerical calculations.

\section{Conclusions}

We have presented a simple set of tools for solving rank-deficient,
but consistent, linear systems and demonstrated their utility with
some numerical examples. Since the perturbed/augmented linear systems
are reasonably well-conditioned with high probability, one can rely on
Krylov subspace based iterative methods (e.g., conjugate gradient for
self-adjoint problems or GMRES for non self-adjoint problems),
avoiding the cost of dense linear algebraic methods, such as Gaussian
elimination or the SVD itself. This is a particularly powerful
approach when $A$ is sparse or there is a fast algorithm for applying
$A$ to a vector.  Finite rank-deficiency issues arise in the
continuous setting as well, especially in integral equation methods,
which we have touched on only briefly here.

We are currently working on the development of robust software for the
nullspace problem that we expect will be competitive 
with standard approaches such as QR-based schemes \cite{CHAN},
inverse iteration \cite{DHILLON,GVL} or Arnoldi methods \cite{GOLUB}.

\section{Acknowledgment}

We thank Mark Tygert for many helpful discussions.


\begin{thebibliography}{99}

\bibitem{BV}
{\sc J. Barlow and U. Vemulapati} (1992),
{``Rank detection methods for sparse matrices
in three dimensions''},
{\em SIAM J. Matrix Anal.,} {\bf 13}, 1279--1297.

\bibitem{CHAN}
{\sc T. F. Chan} (1987),
{``Rank revealing QR factorizations''},
{\em Lin. Alg. and its Appl.} {\bf 88/89}, 67--82.

\bibitem{CP}
{\sc T. F. Coleman and A. Pothen} (1986),
{``The null space problem I: complexity"},
{\em SIAM J. Algebraic Discrete Methods}, {\bf 7}, 527--537.

\bibitem{CP2}
{\sc T. F. Coleman and A. Pothen} (1986),
{``The null space problem II: algorithms"},
{\em SIAM J. Algebraic Discrete Methods}, {\bf 8}, 544--563.

\bibitem{DHILLON}
{\sc I. S. Dhillon} (1998),
{``Current inverse iteration software can fail"},
{\em BIT}, {\bf 38}, 685--704.

\bibitem{mfie_mc}
{\sc C. L. Epstein, Z. Gimbutas, L. Greengard, A. Kl\"{o}ckner, and
M. O'Neil} (2013),
{``A Consistency Condition for the Vector Potential in
Multiply-Connected Domains"},
{\em IEEE Trans. Magnetics}, {\bf 49}, 1072--1076.

\bibitem{GOLUB}
{\sc G. H. Golub and C. Greif} (2006),
{``An Arnoldi-type algorithm for computing PageRank"}, 
{\em BIT}, {\bf 46}, 759--771.

\bibitem{GVL}
{\sc G. H. Golub and C. F. Van Loan} (1996), 
{``Matrix Computations''},
{\em Johns Hopkins University Press,} Baltimore.

\bibitem{GL}
{\sc R. B. Guenther and J. W. Lee} (1988),
{``Partial Differential Equations of Mathematical Physics and Integral 
Equations''},
Prentice-Hall Englewood Cliffs, New Jersey.

\bibitem{HMT}
{\sc N. Halko, P.G. Martinsson, and J. Tropp} (2011),
{``Finding structure with randomness: Probabilistic algorithms for 
constructing approximate matrix decompositions"},
{\em SIAM Review}, {\bf 53}, 217--288.

\bibitem{HANSEN1}
{\sc P. C. Hansen} (1990),
{``Truncated SVD solutions to discrete 
ill-posed problems with ill-determined 
numerical rank"},
{\em SIAM J. Sci. Stat. Comput.}, {\bf 11}, 503--518.

\bibitem{HANSEN2}
{\sc P. C. Hansen} (1998),
{``Rank Deficient and Discrete Ill-Posed Problems''}, 
SIAM, Philadelphia.

\bibitem{Hochstenbach}
{\sc M.E. Hochstenbach and L. Reichel} (2010),
{``Subspace-restricted singular value decompositions for 
linear discrete ill-posed problems''},
{\em J. Comp. Appl. Math.}, {\bf 235}, 1053--1064.

\bibitem{IPSEN}
{\sc I. C. F. Ipsen} (1997),
{``Computing an eigenvector with inverse iteration''},
{\em SIAM Review}, {\bf 39}, 254--291.

\bibitem{Tygert1}
{\sc E. Liberty, F. Woolfe, P.G. Martinsson and M. Tygert} (2007),
{``Randomized algorithms for the low-rank approximation of matrices''},
{\em PNAS}, {\bf 104}, 20167--20172.

\bibitem{PAN1}
{\sc V. Y. Pan and G.Qian} (2010),
{``Randomized preprocessing of homogeneous linear systems of equations,''},
{\em Lin. Alg. and its Appl.}, {\bf 432}, 3272--3318.

\bibitem{PAN2}
{\sc V. Y. Pan and G.Qian} (2012),
{``Solving linear systems of equations with randomization, augmentation and aggregation''},
{\em Lin. Alg. and its Appl.}, {\bf 437}, 2851--2876.

\bibitem{Tygert2}
{\sc V. Rokhlin and M. Tygert} (2008),
{``A fast randomized algorithm for overdetermined linear 
least-squares regression"},
{\em PNAS}, {\bf 105}, 13212--13217.

\bibitem{STEWART}
{\sc G. W. Stewart} (1984),
{``Rank degeneracy"},
{\em SIAM J. Sci. Stat. Comput.}, {\bf 5}, 403--413.

\bibitem{mfie_stab}
{\sc F. Vico, Z. Gimbutas, L. Greengard and M. Ferrando-Bataller} (2013),
{``Overcoming low-frequency breakdown of the magnetic field
integral equation"},
{\em IEEE Trans. Ant. Prop.}, {\bf 61}, 1285--1290.


\end{thebibliography}
\end{document}